\theoremstyle{plain}\newtheorem{Theorem}{Theorem}[section]
\theoremstyle{plain}\newtheorem{Corollary}[Theorem]{Corollary}
\theoremstyle{plain}\newtheorem{Lemma}[Theorem]{Lemma}
\theoremstyle{definition}\newtheorem{Notation}[Theorem]{Notation}
\begin{document}
\title{The Brauer indecomposability of Scott modules with semidihedral vertex}
\date{\today}
\author{{SHIGEO KOSHITANI AND {\.I}PEK TUVAY}}
\address{Center for Frontier Science,
Chiba University, 1-33 Yayoi-cho, Inage-ku, Chiba 263-8522, Japan.}
\email{koshitan@math.s.chiba-u.ac.jp}
\address{Mimar Sinan Fine Arts University, Department of Mathematics, 34380, Bomonti, \c{S}i\c{s}li, Istanbul, Turkey}
\email{ipek.tuvay@msgsu.edu.tr}

\thanks
{Keywords: Brauer indecomposability; Scott modules;  Brauer construction; Semidihedral group
\\ \indent
{2010 {\it Mathematics subject classification:} 20C20, 20C05, 20C15}}
\maketitle

\begin{abstract}  We present a sufficient condition for the
$kG$-Scott module with vertex $P$ to remain
indecomposable under the Brauer construction for any subgroup $Q$ of $P$
as $k[Q\,C_G(Q)]$-module,
where $k$ is a field of characteristic $2$,
and $P$ is a semidihedral $2$-subgroup of a finite group $G$.
This generalizes results for the cases where $P$ is abelian
or dihedral.
The Brauer indecomposability  is defined
\linebreak
by R.~Kessar, N.~Kunugi and N.~Mitsuhashi.
The motivation of
\linebreak
this paper is the fact that the Brauer indecomposability of
a $p$-permutation bimodule ($p$ is a prime) is one of the key steps in order
to obtain a splendid stable equivalence of Morita type
by making use of the gluing method
due to Brou\'e, Rickard, Linckelmann and Rouquier, that then can possibly
be lifted to a splendid derived (splendid Morita) equivalence.
\end{abstract}


\section{Introduction and notation}
Throughout this paper we denote by $k$ an algebraically closed field
of characteristic $p>0$ and by $G$ a finite group.
For a $p$-permutation $kG$-module $M$ and a $p$-subgroup $P$ of $G$,
the Brauer construction (Brauer quotient) $M(P)$ of $M$ with respect to $P$ plays a very
important role. It canonically becomes a $p$-permutation module
over $kN_G(P)$ (see p.402 in \cite{Bro}).
In their paper \cite{KKM} Kessar, Kunugi and Mitsuhashi introduce a notion {\sl Brauer indecomposability}.
Namely, $M$ is called {\sl Brauer indecomposable} if the restriction
module ${\mathrm{Res}} ^{N_G(Q)}_{Q\,C_G(Q)} [M(Q)]$ is indecomposable or zero
as $k[Q\,C_G(Q)]$-module for any subgroup $Q$ of $P$.
Actually in order to get a kind of equivalence between
the module categories for the principal block algebras $A$ and $B$
of the group algebras $kG$ and $kH$ respectively
(where $H$ is another finite group), e.g.
in order to prove Brou\'e's abelian defect group conjecture,
we usually first of all
have to face a situation such that $A$ and $B$
are stably equivalent of Morita type.
In order to get the stable equivalence,
we often want to check whether the $k(G\times H)$-Scott module
${\mathrm{Sc}}(G\times H, \Delta P)$ with a vertex
$\Delta P:=\{(u,u)\in P\times P \}$ induces a stable
equivalence of Morita type between $A$ and $B$, where
$P$ is a common Sylow $p$-subgroup of $G$ and $H$
by making use of
the gluing method originally due to Brou\'e (see \cite{Bro94}), and also
Rickard, Linckelmann and Rouquier.
If this is the case, then ${\mathrm{Sc}}(G\times H, \Delta P)$
have to be Brauer indecomposable.
Therefore it should be very important to know whether
the $k(G\times H)$-Scott module
${\mathrm{Sc}}(G\times H, \Delta P)$
is Brauer indecomposable or not.
This is the motivation why we have written this paper.

Actually, our main results are the following:

\begin{Theorem}\label{M1}
Suppose that a finite group $G$ has a semidihedral $2$-subgroup $P$.
Assume further that the fusion system ${\mathcal F}_P(G)$ of $G$ over $P$ is saturated and that
$C_G(Q)$ is $2$-nilpotent for every fully ${\mathcal F}_P(G)$-normalized non-trivial subgroup
$Q$ of $P$. Then the Scott module
${\mathrm{Sc}}(G,P)$ is Brauer indecomposable.
\end{Theorem}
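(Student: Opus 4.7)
The plan is to follow the two-step strategy that proved successful in the abelian and dihedral vertex cases. First, by the Kessar--Kunugi--Mitsuhashi reduction, it is enough to prove that for every fully ${\mathcal F}_P(G)$-normalized $Q\leq P$ the $k[Q\,C_G(Q)]$-module ${\mathrm{Res}}^{N_G(Q)}_{Q\,C_G(Q)}[{\mathrm{Sc}}(G,P)(Q)]$ is indecomposable or zero; the case $Q=1$ is immediate, so fix a non-trivial fully normalized $Q$. Since ${\mathcal F}_P(G)$ is saturated, Brou\'e's identification gives ${\mathrm{Sc}}(G,P)(Q)\cong {\mathrm{Sc}}(N_G(Q),N_P(Q))$ as $k[N_G(Q)]$-modules, so the task reduces to showing that ${\mathrm{Res}}^{N_G(Q)}_{Q\,C_G(Q)}{\mathrm{Sc}}(N_G(Q),N_P(Q))$ is indecomposable.

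Next I would exploit the $2$-nilpotency of $C_G(Q)$. Saturation ensures $Q$ is also fully centralized, so $C_P(Q)$ is a Sylow $2$-subgroup of $C_G(Q)$; setting $K:=O_{2'}(C_G(Q))$, $2$-nilpotency yields $C_G(Q)=K\rtimes C_P(Q)$ and hence $Q\,C_G(Q)=K\rtimes (Q\,C_P(Q))$, with $N_P(Q)\cap Q\,C_G(Q)=Q\,C_P(Q)$. Because $K$ is characteristic in $C_G(Q)\trianglelefteq N_G(Q)$ and has odd order, $K\leq O_{2'}(N_G(Q))$, so $K$ acts trivially on the principal-block module ${\mathrm{Sc}}(N_G(Q),N_P(Q))$. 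The restriction to $Q\,C_G(Q)$ therefore factors through $Q\,C_G(Q)/K\cong Q\,C_P(Q)$, a $2$-group, and the question reduces to indecomposability of the restriction of the corresponding inflated Scott module to this $2$-subgroup. At this point I would invoke the Mackey-type Scott-module lemma together with Burry--Carlson-type identifications of Green correspondents used in the dihedral case, to conclude that the restriction coincides with ${\mathrm{Sc}}(Q\,C_G(Q),Q\,C_P(Q))$, which is indecomposable by definition.

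The bulk of the work, and the main obstacle, is the case-by-case verification across the ${\mathcal F}_P(G)$-conjugacy classes of non-trivial subgroups of the semidihedral group $P$: the cyclic subgroups of the maximal cyclic subgroup $\langle a\rangle$, the Klein four subgroups generated by the central involution and a non-central involution, the unique maximal dihedral subgroup, the unique maximal generalized quaternion subgroup, and $P$ itself. For each representative $Q$ one must identify $N_P(Q)$ and $C_P(Q)$ from the semidihedral presentation and verify the fusion-theoretic hypothesis of the Scott-module lemma, using saturation of ${\mathcal F}_P(G)$ to pass from abstract fusion data to the structure of the Weyl-type quotient $N_G(Q)/Q\,C_G(Q)$. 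The hardest subcases are expected to be the Klein four subgroups, for which ${\mathrm{Aut}}(Q)\cong S_3$ a priori allows an outer automizer of order~$3$, and the quaternion subgroup of order~$8$, for which ${\mathrm{Out}}(Q_8)\cong S_3$ similarly admits a non-trivial odd part; in these cases the rigid structure of the semidihedral group, combined with $2$-nilpotency of $C_G(Q)$, must be used to control the odd part of $N_G(Q)/Q\,C_G(Q)$ and to prevent the restricted Scott module from decomposing into more than one summand.
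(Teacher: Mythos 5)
Your reduction is the right one (it is Ishioka--Kunugi's criterion, Theorem 1.3 of \cite{IK}, which the paper quotes as Theorem \ref{IK1}), and you correctly locate the danger in the subgroups $Q\cong C_2\times C_2$ and $Q\cong Q_8$, where ${\mathrm{Out}}(Q)\cong S_3$. But the central step of your second paragraph is asserted, not proved, and it is exactly the point of the theorem. Knowing that $K=O_{2'}(C_G(Q))$ acts trivially on ${\mathrm{Sc}}(N_G(Q),N_P(Q))$ tells you that the \emph{action} of $Q\,C_G(Q)$ factors through the $2$-group $Q\,C_P(Q)$, but a module over a $2$-group in characteristic $2$ can perfectly well be decomposable, and a restriction of an indecomposable module to a subgroup generally is. The identification of the restriction with ${\mathrm{Sc}}(Q\,C_G(Q),Q\,C_P(Q))$ cannot be extracted from ``Mackey-type'' and ``Burry--Carlson-type'' lemmas alone: if $N_G(Q)/Q\,C_G(Q)\cong S_3$, the restriction could a priori split into three conjugate summands, and something must rule this out.

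What actually rules it out in the paper is the second Ishioka--Kunugi criterion (Theorem \ref{IK2}): one must exhibit a subgroup $H_Q\leq N_G(Q)$ with $N_P(Q)\in{\mathrm{Syl}}_2(H_Q)$ and $|N_G(Q):H_Q|$ a power of $2$. When ${\mathrm{Aut}}(Q)$ is a $2$-group this is easy ($N_G(Q)$ is then $2$-nilpotent and one takes $H_Q=O_{2'}(N_G(Q))\rtimes N_P(Q)$), but when $N_G(Q)/Q\,C_G(Q)\cong S_3$ the existence of $H_Q$ requires a genuine group-theoretic input: one passes to $N_G(Q)/(K\times Q)$, observes via the semidihedral structure (Lemma (viii) of \cite{KLCP}) that $N_P(Q)$ contributes an involution outside the normal $2$-subgroup $\overline{Q\,C_G(Q)}$, and then applies the Baer--Suzuki-type Lemma 4.2 of \cite{KL} to find an $S_3$-subgroup through that involution whose preimage is the desired $H_Q$ (Corollary \ref{Q8} of the paper). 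Your third paragraph acknowledges that the odd part of the automizer ``must be controlled'' but supplies no mechanism for doing so; that mechanism is the missing idea. (A smaller point: for this theorem no case-by-case analysis over conjugacy classes of $Q$ is needed, since $2$-nilpotency of $C_G(Q)$ is a hypothesis; the only dichotomy required is whether ${\mathrm{Aut}}(Q)$ is a $2$-group or $Q\in\{C_2\times C_2,\,Q_8\}$, which follows from the classification of subgroups of a semidihedral group.)
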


\begin{Theorem}\label{product}
Let $G$ and $G'$ be finite groups with a common Sylow $2$-subgroup $P$ which is a
semidihedral group,
and assume that the two fusion systems of $G$ and $G'$ over $P$ are the same,
namely ${\mathcal F}_P(G)={\mathcal F}_P(G')$.
Then the Scott module
${\mathrm{Sc}}(G \times G', \Delta P)$ is Brauer indecomposable.
\end{Theorem}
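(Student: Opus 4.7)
The plan is first to identify the fusion system $\mathcal{F}_{\Delta P}(G\times G')$ with the common fusion system $\mathcal{F}_P(G)=\mathcal{F}_P(G')$, and then to try to reduce Theorem~\ref{product} to Theorem~\ref{M1} applied to the pair $(G\times G',\Delta P)$. For the identification, the diagonal map $u\mapsto(u,u)$ gives a bijection between subgroups $Q\le P$ and $\Delta Q\le\Delta P$. Any fusion morphism $\phi\colon Q\to R$ in $\mathcal{F}_P(G)=\mathcal{F}_P(G')$ is realized by some $g\in G$ and simultaneously by some $g'\in G'$; the pair $(g,g')\in G\times G'$ then induces $\Delta\phi\colon\Delta Q\to\Delta R$ in $\mathcal{F}_{\Delta P}(G\times G')$, and conversely every morphism in $\mathcal{F}_{\Delta P}(G\times G')$ arises this way. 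So $\mathcal{F}_{\Delta P}(G\times G')\cong\mathcal{F}_P(G)$ as fusion systems; in particular $\mathcal{F}_{\Delta P}(G\times G')$ is saturated (since $P$ is Sylow in $G$), and $\Delta Q$ is fully $\mathcal{F}_{\Delta P}(G\times G')$-normalized exactly when $Q$ is fully $\mathcal{F}_P(G)$-normalized.

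For the second hypothesis of Theorem~\ref{M1} one computes $C_{G\times G'}(\Delta Q)=C_G(Q)\times C_{G'}(Q)$, which is $2$-nilpotent iff both factors are. This is \emph{not} part of the hypothesis of Theorem~\ref{product} and is not automatic: for $G=G'=M_{11}$, for instance, $Z(P)$ is fully normalized but $C_{M_{11}}(Z(P))\cong GL_2(3)$ fails to be $2$-nilpotent. So Theorem~\ref{M1} cannot simply be invoked for $(G\times G',\Delta P)$, and a different route is needed.

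The alternative plan is to work directly with the Brauer construction. For fully normalized $\Delta Q$ one uses the Scott-module formula
\[
\mathrm{Sc}(G\times G',\Delta P)(\Delta Q)\;\cong\;\mathrm{Sc}\bigl(N_{G\times G'}(\Delta Q),\Delta N_P(Q)\bigr)
\]
as $k[N_{G\times G'}(\Delta Q)]$-modules, together with the fibre-product description $N_{G\times G'}(\Delta Q)=N_G(Q)\times_{\mathrm{Aut}(Q)}N_{G'}(Q)$ (an equality that crucially uses $\mathcal{F}_P(G)=\mathcal{F}_P(G')$), and analyses the restriction to $\Delta Q\cdot(C_G(Q)\times C_{G'}(Q))$ via a Mackey decomposition along the two coordinate projections. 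The hope is that the additional factor $C_{G'}(Q)$ (resp.\ $C_G(Q)$) in $\Delta Q\cdot(C_G(Q)\times C_{G'}(Q))$ provides enough extra room to force indecomposability even when the individual $C_G(Q)$ (resp.\ $C_{G'}(Q)$) fails to be $2$-nilpotent.

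The hard step, as in the earlier abelian and dihedral cases that the paper generalises, will be establishing indecomposability of this restriction in the absence of a $2$-nilpotency hypothesis on the centralizers. I expect the argument to proceed by a case-by-case analysis of the possible fully $\mathcal{F}_P(G)$-normalized non-trivial subgroups $Q$ of a semidihedral $P$, namely $Z(P)$, cyclic subgroups of the maximal cyclic subgroup, Klein-four subgroups, the $D_8$ and $Q_8$ subgroups, and $P$ itself, combined with the Alperin--Brauer--Gorenstein classification of finite groups with semidihedral Sylow $2$-subgroup, Green correspondence, and standard Mackey-type computations.
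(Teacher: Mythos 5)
Your reduction analysis is half right but the proof itself is missing. You correctly identify the genuine obstruction --- Theorem~\ref{M1} cannot simply be applied to $(G\times G',\Delta P)$ because $C_{G\times G'}(\Delta Q)=C_G(Q)\times C_{G'}(Q)$ need not be $2$-nilpotent when $|Q|=2$ (your $GL_2(3)$ example inside $M_{11}$ is exactly the right kind of counterexample). But you then stop at a plan (``the hope is\dots'', ``I expect the argument to proceed by\dots'') without supplying the mechanism that actually closes the gap. Two concrete things are missing. First, you do not isolate \emph{which} subgroups are problematic: for every $Q\le P$ with $|Q|\ge 4$ the centralizers $C_G(Q)$ and $C_{G'}(Q)$ \emph{are} automatically $2$-nilpotent --- for subgroups meeting the maximal cyclic subgroup outside $Z(P)$ this is Brauer's local analysis of groups with semidihedral Sylow $2$-subgroups, and for the Klein-four and $\langle xy\rangle$-type subgroups one has $C_P(Q)=Q$, so $Q$ is an abelian normal Sylow $2$-subgroup of $C_G(Q)$ and Schur--Zassenhaus gives $C_G(Q)=Q\times O_{2'}(C_G(Q))$. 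This reduces everything to $|Q|=2$, where the machinery of Theorem~\ref{M1} (via Lemma~\ref{cent}) applies verbatim to the diagonal situation.

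Second, and more seriously, for $|Q|=2$ your proposed route (Mackey decomposition along the coordinate projections, hoping the second factor ``provides extra room'') is not what makes the argument work, and I see no way to make it work as stated: the extra factor $C_{G'}(Q)$ does not by itself force indecomposability. The argument that does work is a bootstrap on vertices: write $M(\Delta Q)=M_1\oplus\cdots\oplus M_r$ as $N_{G\times G'}(\Delta Q)$-modules, show via Burry--Carlson--Puig and a Mackey/vertex-containment computation that every $M_i$ has vertex exactly $\Delta S$, where $S=C_P(Q)$ (so $S=P$ when $Q=Z(P)$, and $S=\langle y,z\rangle\cong C_2\times C_2$ when $Q=\langle y\rangle$); hence $M_i(\Delta S)\ne 0$ for every $i$. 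Since $\Delta Q\trianglelefteq\Delta S$, one has $M(\Delta S)\cong(M(\Delta Q))(\Delta S)=\bigoplus_i M_i(\Delta S)$, and the indecomposability of $M(\Delta S)$ as a $C_{G\times G'}(\Delta S)$-module --- already established in the $|S|\ge 4$ cases --- forces $r=1$. Without this descent from the larger subgroup $\Delta S$, your proposal does not prove the theorem; the appeal to the Alperin--Brauer--Gorenstein classification and ``standard Mackey-type computations'' is not a substitute for it.
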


These theorems in a sense generalize \cite{KKM, KKL, KL, KL2}, and
there are results on Brauer indecomposability of Scott modules also
in \cite{ KT, T}.

\begin{Notation} Besides the notations explained above we need the following notations and terminology.
In this paper $G$ is always a finite group, and $k$ is an algebraically closed field of characteristic $p>0$.

By a $kG$-module we mean a finitely generated left $kG$-module unless stated otherwise.
For a $kG$-module and a $p$-subgroup $P$ of $G$ the Brauer construction (Brauer quotient) $M(P)$ is
defined as in \S27 of \cite{Th} or p.402 in \cite{Bro}. For such $G$ and $P$ we write $\mathcal F_P(G)$ for
the fusion system (fusion category) of $G$ over $P$ as in I.Definition 1.1 of \cite{AKO}.
For two $kG$-modules $M$ and $L$, we write $L\,|\,M$ if $L$ is (isomorphic to) a direct summand of $M$
as a $kG$-module.
For a subgroup $H$ of $G$, a $kG$-module $M$ and a $kH$-module $N$,
we write ${\mathrm{Res}}^G_H(M)$ for the restriction module
$M$ from $G$ to $H$, and ${\mathrm{Ind}}_H^G(N)$ for the induced module of $N$ from $H$ to $G$.
For a subgroup $H\leq G$ we denote the (Alperin-)Scott $kG$-module with respect to $H$
by ${\mathrm{Sc}}(G,H)$.
By definition, ${\mathrm{Sc}}(G,H)$ is the unique indecomposable direct summand
of ${\mathrm{Ind}}_H^G(k_H)$ which contains $k_G$ in its
top (or equivalently in its socle),
where $k_H$ and $k_G$ are the trivial $kH$- and $kG$-modules, respectively.
We refer the reader to
\S2 of \cite{Bro} and  in \S 4.8.4 of \cite{NT} for further details on Scott modules.

We write $O_{p'}(G)$ for the largest normal ${p'}$-subgroup of $G$,
and $Z(G)$ for the center of $G$.
For $x, y\in G$ we set $x^y:=y^{-1}xy$ and $^y\!x:= yxy^{-1}$.
Further for a subgroup $H$ of $G$ and $g\in G$
we set $^g\!H:=\{\, ^g\!h \, |\, \forall h\in H \}$.
For two groups $K$ and $L$ we write $K\rtimes L$ for a semi-direct product of $K$ by $L$
where $K \vartriangleleft (K\rtimes L)$.
For a positive integer $m$, we mean by $C_m$ and $S_m$
the cyclic group of order $m$ and the symmetric group of degree $m$, respectively.
Furthermore  $Q_{2^m}$ for $m\geq 3$ and {\sf{SD}}$_{2^m}$ for $m\geq 4$ are
the generalized quaternion group of order $2^m$ and the semidihedral group of order $2^m$,
respectively.
In fact more precisely speaking our main character {\sl the semidihedral group} {\sf{SD}}$_{2^n}$ is defined by
$$
\text{\sf {SD}}_{2^n}:= \langle x,y \ | \ x^{2^{n-1}}=y^2=1, \ y^{-1}xy =x^{2^{n-2}-1}\rangle
\text{ where }n\geq 4.
$$
We fix the notations $x$, $y$ and $n$ throughout this paper. We also set $z:=x^{2^{n-2}}$,
and hence $\langle z\rangle = Z({\text{\sf SD}}_{2^n})\cong C_2$, see p.191 of \cite{Gor}.

For the other notations and terminologies, see the books
\cite{NT}, \cite{Gor} and \cite{AKO}.
\end{Notation}

The organization of this paper is as follows.
In \S2 we give several theorems which have been proved, and also lemmas
that are useful of our aim, and already in \S2 we prove our first result Theorem \ref{M1}.
In \S3 we prove our second result Theorem \ref{product}.


\section{Preliminaries/Lemma}
In this section we list several previous results and lemmas which are useful
to prove our main results.
The following results of Ishioka and Kunugi will be used for showing Brauer indecomposability of Scott
module under consideration.

\begin{Theorem}[{Theorem 1.3 of \cite{IK}}]\label{IK1}
Assume that $P$ is a $p$-subgroup of $G$ and
${\mathcal F}_P(G)$ is saturated. Then the following assertions are equivalent:
\begin{enumerate}
\item[\rm (1)] ${\mathrm{Sc}}(G,P)$ is Brauer indecomposable.
\item[\rm (2)] ${\mathrm{Res}}_{Q\,C_G(Q)}^{N_G(Q)} {\mathrm{Sc}}(N_G(Q), N_P(Q))$ is indecomposable for each fully ${\mathcal F}_P(G)$-normalized
subgroup $Q$ of $P$.
\end{enumerate}
If these conditions are satisfied, then $({\mathrm{Sc}}(G,P))(Q)
\cong {\mathrm{Sc}}(N_G(Q),N_P(Q))$ for each fully ${\mathcal F}_P(G)$-normalized
subgroup $Q \leq P$.
\end{Theorem}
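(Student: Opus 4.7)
The plan is to reduce the Brauer indecomposability of $\mathrm{Sc}(G,P)$ to a statement about Scott modules of normalizers, via the natural isomorphism
$$
\bigl(\mathrm{Sc}(G,P)\bigr)(Q)\;\cong\;\mathrm{Sc}(N_G(Q),N_P(Q))
$$
of $kN_G(Q)$-modules, valid for every fully $\mathcal{F}_P(G)$-normalized subgroup $Q\le P$. Granting this identification, the concluding ``moreover'' clause of the theorem and direction (1)$\Rightarrow$(2) both follow immediately, since for any $Q\le P$ the Brauer construction is non-zero ($Q$ lies inside the vertex $P$ of $\mathrm{Sc}(G,P)$), so ``indecomposable or zero'' reduces to ``indecomposable''.

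To establish the identification I would first note that by saturation and the fully normalized hypothesis, $N_P(Q)$ is a Sylow $p$-subgroup of $N_G(Q)$. The module $\bigl(\mathrm{Sc}(G,P)\bigr)(Q)$ is a non-zero $p$-permutation $kN_G(Q)$-module, and the image of $k_G$ under the Brauer map gives a surjection onto the trivial module $k_{N_G(Q)}$. Hence there is a unique indecomposable summand containing $k_{N_G(Q)}$ in its top, and by the defining property of the Scott module together with the fact that the vertex of this summand is $N_P(Q)$ (using Brou\'e's analysis of vertices under the Brauer construction), that summand must coincide with $\mathrm{Sc}(N_G(Q),N_P(Q))$. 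That this summand exhausts $\bigl(\mathrm{Sc}(G,P)\bigr)(Q)$ is the technical heart: it follows from the indecomposability of $\mathrm{Sc}(G,P)$ together with the behaviour of the Brauer functor at fully normalized subgroups of a vertex.

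For (2)$\Rightarrow$(1), given an arbitrary $Q\le P$, saturation provides some $g\in G$ with $Q':={}^gQ\le P$ fully $\mathcal{F}_P(G)$-normalized. Conjugation by $g$ then induces a compatible isomorphism between $\mathrm{Res}^{N_G(Q)}_{QC_G(Q)}\bigl[\bigl(\mathrm{Sc}(G,P)\bigr)(Q)\bigr]$ and $\mathrm{Res}^{N_G(Q')}_{Q'C_G(Q')}\bigl[\bigl(\mathrm{Sc}(G,P)\bigr)(Q')\bigr]$, so the indecomposability of the latter given by (2) (via the identification already established) transports to the former. The main obstacle throughout will be the identification $\bigl(\mathrm{Sc}(G,P)\bigr)(Q)\cong\mathrm{Sc}(N_G(Q),N_P(Q))$ in the fully normalized case: rigorously ruling out stray summands requires careful vertex bookkeeping, essential use of the saturation axiom (to guarantee the Sylow property of $N_P(Q)$), and full exploitation of the indecomposability of $\mathrm{Sc}(G,P)$ itself.
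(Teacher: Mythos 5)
First, a point of order: the paper does not prove this statement at all --- it is quoted verbatim as Theorem 1.3 of Ishioka--Kunugi \cite{IK} --- so there is no in-paper proof to compare against; I can only judge your argument on its own terms, and it has a genuine gap sitting exactly where you place the ``technical heart''. You take the isomorphism $(\mathrm{Sc}(G,P))(Q)\cong\mathrm{Sc}(N_G(Q),N_P(Q))$ for fully $\mathcal{F}_P(G)$-normalized $Q$ as the foundation of both implications, but this isomorphism is \emph{not} unconditionally true: the theorem itself asserts it only as a consequence of (1)/(2) holding, and if it were automatic the equivalence would be nearly vacuous. What holds unconditionally (Lemma 3.1 of \cite{IK}, which is exactly what the present paper invokes in Case 3 of the proof of Theorem \ref{product} when it writes $M(\Delta Q)=M_1\oplus\cdots\oplus M_r$ and only ``sets'' $M_1$ to be the local Scott module) is that $\mathrm{Sc}(N_G(Q),N_P(Q))$ occurs as a \emph{direct summand} of $(\mathrm{Sc}(G,P))(Q)$; ruling out the complementary summands is the entire content of $(2)\Rightarrow(1)$. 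Your stated justification --- ``the indecomposability of $\mathrm{Sc}(G,P)$ together with the behaviour of the Brauer functor at fully normalized subgroups of a vertex'' --- cannot carry this: indecomposability of $M$ as a $kG$-module imposes no such constraint on $M(Q)$ as a $kN_G(Q)$-module (Brauer quotients of indecomposable $p$-permutation modules decompose all the time), and your argument for the key identification never invokes hypothesis (2), even though (2) is indispensable for that direction.

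Concretely, the missing step is the kind of argument replayed in Case 3 of the proof of Theorem \ref{product}: with $M(Q)=M_1\oplus\cdots\oplus M_r$ and $M_1=\mathrm{Sc}(N_G(Q),N_P(Q))$, one uses Lemma 3.2 of \cite{IK} together with the Burry--Carlson--Puig theorem to pin down the vertices of all $M_i$, deduces $M_i(R)\neq 0$ for a suitable overgroup $R$ of $Q$, and then applies the Brauer construction again at $R$, where indecomposability over the relevant centralizer (supplied by hypothesis (2), or by an inductive step) forces $r=1$. Your direction $(1)\Rightarrow(2)$ is essentially fine once Lemma 3.1 of \cite{IK} is granted (indecomposability of $\mathrm{Res}^{N_G(Q)}_{QC_G(Q)}M(Q)$ plus the summand relation forces $M(Q)=\mathrm{Sc}(N_G(Q),N_P(Q))$, which also yields the final clause), and the conjugation reduction to fully normalized subgroups is standard; but as written, your proof of $(2)\Rightarrow(1)$ assumes precisely what has to be proved.
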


\begin{Theorem}[{Theorem 1.4 of \cite{IK}}]\label{IK2}
Assume that $P$ is a $p$-subgroup of $G$ and ${\mathcal F}_P(G)$ is
saturated. Let $Q$ be a fully ${\mathcal F}_P(G)$-normalized subgroup of $P$. Assume further that there exists a subgroup
$H_Q$ of $N_G(Q)$ satisfying the following conditions:
\begin{enumerate}
\item[\rm (1)] $N_P(Q)$ is a Sylow $p$-subgroup of  $H_Q$ and
\item[\rm (2)] $|N_G(Q):H_Q|=p^a$ for an integer $a\geq 0$.
\end{enumerate}
Then ${\mathrm{Res}}_{Q\,C_G(Q)}^{N_G(Q)} {\mathrm{Sc}}(N_G(Q), N_P(Q))$ is indecomposable.
\end{Theorem}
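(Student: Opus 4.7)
The plan is to reduce to the situation inside $H := H_Q$, where $T := N_P(Q)$ is by (1) already a full Sylow $p$-subgroup, and then lift indecomposability back to $N := N_G(Q)$ using the $p$-power index $|N:H| = p^a$ from (2).

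Setup. Write $L := Q\,C_G(Q)$ and $S := \mathrm{Sc}(N,T)$. Since $C_G(Q) \trianglelefteq N_G(Q)$ we have $L \trianglelefteq N$. A short direct computation with $L = Q\,C_G(Q)$ yields $H \cap L = Q\,C_H(Q)$ and $T \cap L = Q\,C_T(Q)$; one also checks that $Q\,C_T(Q)$ is a Sylow $p$-subgroup of $Q\,C_H(Q)$, using $Q \trianglelefteq H$ and $T \in \mathrm{Syl}_p(H)$.

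Step 1 (reduction to $H$). I would first establish that $S$ appears as the unique summand with vertex $T$ of $\mathrm{Ind}^N_H \mathrm{Sc}(H,T)$ containing $k_N$ in its top; this rests on the standard fact that for a subgroup of $p$-power index, induction carries the Scott component of one group to the Scott component of the other. Combined with $L \trianglelefteq N$, a Mackey decomposition then reduces the problem to showing that $\mathrm{Res}^H_{Q\,C_H(Q)}\mathrm{Sc}(H,T)$ is indecomposable.

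Step 2 (Sylow case in $H$). In $H$, $T$ is Sylow, so $\mathrm{Sc}(H,T)$ is a Scott module with respect to a Sylow $p$-subgroup. I would identify $\mathrm{Res}^H_{Q\,C_H(Q)}\mathrm{Sc}(H,T)$ with the Scott module $\mathrm{Sc}(Q\,C_H(Q),\,Q\,C_T(Q))$ by checking that both are indecomposable $p$-permutation modules with the same vertex and the same Brauer quotient there, a coincidence that forces isomorphism. Since Scott modules are by definition indecomposable, the restriction is indecomposable; the Mackey argument of Step 1 then carries this back to $\mathrm{Res}^N_L S$, with the $N$-invariance of the trivial summand in the top of $S$ blocking any Clifford-type splitting along $L$.

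The main obstacle is Step 2: identifying the restriction of a Scott module to the normal subgroup $Q\,C_H(Q)$ as a Scott module. Tracking vertices and Brauer quotients precisely enough to conclude isomorphism (rather than a mere direct-summand relation) is the technical heart. A likely cleaner route, perhaps the one in \cite{IK}, is a Burry--Carlson / Green-correspondence argument adapted to Scott modules, which avoids the direct verification of isomorphism.
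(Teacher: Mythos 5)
This statement is quoted from Ishioka--Kunugi (Theorem 1.4 of \cite{IK}); the paper under review gives no proof of it, so your attempt can only be measured against the known argument. Measured that way, it has a genuine gap, and the gap sits exactly where the real work is. First, a small observation that unravels your Step 2: since $N_P(Q)\in\mathrm{Syl}_p(H_Q)$, the trivial module $k_{H_Q}$ is relatively $N_P(Q)$-projective, so $\mathrm{Sc}(H_Q,N_P(Q))=k_{H_Q}$ and $\mathrm{Res}^{H_Q}_{Q\,C_{H_Q}(Q)}\mathrm{Sc}(H_Q,N_P(Q))=k$ is trivially indecomposable. Hence the statement you propose to ``reduce to'' carries no information, and the reduction in Step 1 cannot be valid: knowing that $\mathrm{Sc}(N_G(Q),N_P(Q))$ is a direct summand of $\mathrm{Ind}_{H_Q}^{N_G(Q)}k$ and that the Mackey formula expresses $\mathrm{Res}^{N_G(Q)}_{Q\,C_G(Q)}\mathrm{Ind}_{H_Q}^{N_G(Q)}k$ as a direct sum over $(Q\,C_G(Q),H_Q)$-double cosets proves nothing about indecomposability of that summand's restriction. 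The remark that ``the $N$-invariance of the trivial summand in the top blocks any Clifford-type splitting'' is not an argument: Clifford theory permits the restriction to a normal subgroup to be a direct sum of several copies of one $N$-stable indecomposable, and the trivial module can occur with multiplicity in the top of the restriction.

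The decisive symptom is that your proposal never uses the saturation of $\mathcal F_P(G)$ or that $Q$ is fully normalized, yet these hypotheses are indispensable. Write $N=N_G(Q)$, $L=Q\,C_G(Q)$, $H=H_Q$, $T=N_P(Q)$. The correct argument runs: (i) choose $S\in\mathrm{Syl}_p(N)$ with $T\le S$; an order count from (1) and (2) gives $S\cap H=T$ and $N=SH$, so one Mackey double coset yields $\mathrm{Res}^N_S\mathrm{Ind}_H^Nk\cong\mathrm{Ind}_T^Sk$, which is indecomposable over the $p$-group $S$; hence $\mathrm{Ind}_H^Nk$ is indecomposable and equals $\mathrm{Sc}(N,T)$ (no ``unique summand'' extraction is needed). (ii) Saturation at the fully normalized $Q$ gives $\mathrm{Aut}_P(Q)\in\mathrm{Syl}_p(\mathrm{Aut}_G(Q))$, hence $TL/L\in\mathrm{Syl}_p(N/L)$, so $|N:TL|$ is prime to $p$; since $TL\le HL$ and $|N:HL|$ divides $|N:H|=p^a$, this forces $N=LH$. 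Without this step the Mackey decomposition of $\mathrm{Res}^N_L\mathrm{Ind}_H^Nk$ has $|N:LH|>1$ conjugate summands and the conclusion fails. (iii) With $N=LH$ there is a single double coset, $\mathrm{Res}^N_L\mathrm{Ind}_H^Nk\cong\mathrm{Ind}^L_{Q\,C_H(Q)}k$ (your identity $L\cap H=Q\,C_H(Q)$ is correct), $|L:Q\,C_H(Q)|=p^a$, and $Q\,C_T(Q)\in\mathrm{Syl}_p(Q\,C_H(Q))$ (also correctly observed by you); repeating the Sylow-counting argument of (i) inside $L$ shows $\mathrm{Ind}^L_{Q\,C_H(Q)}k$ is indecomposable. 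So the ingredients you assembled in your ``Setup'' are the right ones, but the engine of the proof --- the equality $N=LH$ extracted from the saturation axiom, and the restriction-to-a-Sylow criterion for indecomposability of induced trivial modules --- is missing.
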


The following lemma is essentially a kind of very nice application of Baer-Suzuki Theorem
Theorem 3.8.2 of \cite{Gor}. It surprisingly does play an important role not only for \cite{KL}
but also for our purpose, though it seems quite elementary. Below, we present a shorter 
proof than the proof given in Lemma 4.2 of \cite{KL}.

\begin{Lemma}[{Lemma 4.2 of \cite{KL}}]\label{S3}
Let $Q$ be a normal $2$-subgroup of $G$ such that $G/Q \cong S_3$.
Assume further that there is an involution $t \in G \backslash Q $. Then $G$ has a subgroup $H$ such that
$t \in H \cong S_3$.
\end{Lemma}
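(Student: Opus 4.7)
The plan is to produce an element $r \in G$ of order $3$ that is inverted by $t$; then $H := \langle t, r \rangle$ is generated by an involution and an order-$3$ element satisfying $t r t^{-1} = r^{-1}$, so $H \cong S_3$ and $t \in H$ as required. The natural tool, following the hint in the text before the statement, is the Baer--Suzuki theorem (Theorem 3.8.2 of \cite{Gor}).

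First I would pin down $O_2(G)$: since $Q$ is a normal $2$-subgroup, $Q \leq O_2(G)$, and conversely the image of $O_2(G)$ in $G/Q \cong S_3$ is a normal $2$-subgroup of $S_3$, hence trivial, so $O_2(G) = Q$. Because $t$ is a $2$-element but $t \notin Q = O_2(G)$, Baer--Suzuki produces some $g \in G$ for which $\langle t, t^g \rangle$ fails to be a $2$-group. Setting $s := t\,t^g$, the fact that $t$ and $t^g$ are two involutions makes $\langle t, t^g \rangle$ a dihedral group with rotation $s$, and a one-line calculation gives $t s t^{-1} = t^g t = (t\,t^g)^{-1} = s^{-1}$.

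It remains to extract an element of order exactly $3$ from $\langle s \rangle$. Write $|s| = 2^a m$ with $m$ odd. The image $\bar s \in G/Q \cong S_3$ is a product of two involutions of $S_3$, hence has order $1$ or $3$; the value $1$ would force $s \in Q$, contradicting that $\langle t, t^g \rangle$ is not a $2$-group, so $|\bar s| = 3$. Now $s^{2^a}$ has odd order $m$, meets the $2$-group $Q$ trivially, and therefore embeds into $G/Q$; since $\overline{s^{2^a}} = \bar s^{2^a}$ still has order $3$, we conclude $m = 3$. Taking $r := s^{2^a}$ then completes the argument, since $t$ inverts every power of $s$.

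The main obstacle I anticipate is recognizing that Baer--Suzuki is the right device in the first place: the statement looks purely elementary, but naively picking an arbitrary Sylow $3$-subgroup $R$ of $G$ need not give one that $t$ normalizes, let alone inverts. Baer--Suzuki is precisely what produces an order-$3$ element \emph{inverted} by $t$, and once it has been invoked the remaining bookkeeping is forced by $|G/Q| = 6$ and the $2$-group structure of $Q$.
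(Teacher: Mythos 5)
Your proof is correct and follows exactly the route the paper indicates: the paper does not reprove the lemma (it cites Lemma 4.2 of \cite{KL}) but explicitly flags it as an application of the Baer--Suzuki theorem, and your argument --- identifying $O_2(G)=Q$, extracting a non-$2$-group $\langle t,t^g\rangle$, and taking the odd part of $tt^g$ to get an order-$3$ element inverted by $t$ --- is that standard proof. All the intermediate verifications (order of $\bar s$ in $G/Q$, forcing $m=3$) check out.
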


\begin{proof}
We first claim that there is an involution $y\in G \backslash Q$ which is different from $t$.

Assume that there is no such an element.
For any $g\in G$ we have $t^g\,{\not\in}\,Q$ 
since $Q\unlhd G$, and hence $t^g=t$. 
So that $t\in Z(G)$.
Moreover, $\langle Q, t \rangle =Q \langle t \rangle$ and
$$Q \unlhd Q \langle t \rangle \unlhd G.$$
We observe that $Q \neq  Q \langle t \rangle$ and
that $ Q \langle t \rangle \neq G$.
Hence $Q \langle t \rangle /Q \unlhd G/Q\cong S_3$ 
implies that $Q \langle t \rangle /Q \cong C_3$. However, 
$Q \langle t \rangle /Q \cong \langle t \rangle / (\langle t \rangle \cap Q)$ 
is a $2$-group, which is a contradiction.

Now, we can write $G=QK$ where $K=\langle t, y\rangle$. 
Since $K$ is generated by the two involutions $t$ and $y$,
$K$ is dihedral.
Hence there is an element $x\in K$ such that 
$K
= \langle x \rangle \rtimes \langle t \rangle$ and that $x^t=x^{-1}$. 
Since $Q$ is a $2$-group and $G/Q \cong S_3$, 
we have $3\,{|}\,|x|$. So there exists an element $u$ which is 
a power of $x$ and is of order $3$. 
Then $u^t=u^{-1}$, so that we can set $H=\langle u, t\rangle \cong S_3$.
\end{proof}

The following corollary is the corresponding version of Corollary 4.3 in \cite{KL} that will be crucial in the proof of
our main results. Although the proof is quite similar, we add it for completeness.

\begin{Corollary}\label{Q8}
Let $P$ be a semidihedral $2$-subgroup of $G$, and let $Q$ be a subgroup of $P$. Assume that
either $Q \cong C_2 \times C_2$ or $Q \cong Q_8$.
Assume, moreover, that $C_G(Q)$ is $2$-nilpotent and $N_G(Q)/Q\,C_G(Q) \cong S_3.$ Then there exists a subgroup $H_Q$ of
$N_G(Q)$ such that $N_P(Q)$ is a Sylow $2$-subgroup of $H_Q$ and $|N_G(Q) : H_Q|$ is a power of $2$
(possibly $1$).
\end{Corollary}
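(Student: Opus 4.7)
The strategy imitates Corollary 4.3 of \cite{KL}: we reduce modulo the normal odd-order subgroup $K := O_{2'}(C_G(Q))$ so that Lemma \ref{S3} applies, then lift the resulting $S_3$-subgroup back to construct $H_Q$.

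By the $2$-nilpotency of $C_G(Q)$, $K$ is characteristic in $C_G(Q) \trianglelefteq N_G(Q)$, hence $K \trianglelefteq N_G(Q)$, and $C_G(Q)/K$ is a $2$-group. Let $\pi : N_G(Q) \twoheadrightarrow \bar G := N_G(Q)/K$, and use bars to denote images. Then $\overline{QC_G(Q)}$ is a normal $2$-subgroup of $\bar G$ and $\bar G/\overline{QC_G(Q)} \cong S_3$. Since $|K|$ is odd and $N_P(Q)$ is a $2$-group, $\pi$ restricts to an injection $N_P(Q) \hookrightarrow \bar G$; set $\bar T := \pi(N_P(Q))$.

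The pivotal step is to locate an involution $t \in N_P(Q)$ with $\bar t := \pi(t) \notin \overline{QC_G(Q)}$. Because $QC_G(Q)$ consists exactly of those elements of $N_G(Q)$ inducing an inner automorphism of $Q$, this amounts to exhibiting a non-central involution of $P$ in $N_P(Q)$ inducing a non-inner automorphism of $Q$. One does this by a case-by-case analysis, using the explicit generators of $P = \text{\sf SD}_{2^n}$ and the structure of $Q \cong C_2 \times C_2$ or $Q \cong Q_8$ inside $P$. Given such a $\bar t$, apply Lemma \ref{S3} to $\bar G$, $\overline{QC_G(Q)}$ and $\bar t$ to produce $\bar H_0 \leq \bar G$ with $\bar H_0 \cong S_3$ and $\bar t \in \bar H_0$. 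Since $\bar H_0 \cap \overline{QC_G(Q)}$ is a normal $2$-subgroup of $S_3$, it is trivial, so $\bar H_0$ maps isomorphically onto $\bar G/\overline{QC_G(Q)}$.

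Let $\bar R \leq \bar H_0$ be the unique order-$3$ subgroup, which is then a Sylow $3$-subgroup of $\bar G$, and set $H_Q := \pi^{-1}(\langle \bar T, \bar R\rangle)$. Because $\bar t \in \bar T$ already normalizes $\bar R \trianglelefteq \bar H_0$, a short analysis using the projection onto $\bar G/\overline{QC_G(Q)} \cong S_3$ shows $\langle \bar T, \bar R\rangle \cap \overline{QC_G(Q)} = \bar T \cap \overline{QC_G(Q)}$. Consequently $\langle \bar T, \bar R\rangle$ has order $3|\bar T|$, its Sylow $2$-subgroup is $\bar T$, and $[\bar G : \langle \bar T, \bar R\rangle] = 2|\overline{QC_G(Q)}|/|\bar T|$ is a power of $2$. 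Pulling back, $N_P(Q)$ is a Sylow $2$-subgroup of $H_Q$ and $[N_G(Q) : H_Q]$ is a power of $2$, as required. The main obstacle is the third paragraph: the case-by-case production of the involution $t \in N_P(Q) \setminus QC_G(Q)$, together with the identity $\langle \bar T, \bar R\rangle \cap \overline{QC_G(Q)} = \bar T \cap \overline{QC_G(Q)}$, both of which hinge on the fine structure of semidihedral groups and the embedding of $Q$ in $P$.
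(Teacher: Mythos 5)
Your overall strategy (pass to a quotient by a normal odd-order subgroup, apply Lemma \ref{S3} to get an $S_3$, pull back) is the same as the paper's, but your choice of kernel creates a genuine gap at exactly the step you flag as pivotal. You quotient only by $K=O_{2'}(C_G(Q))$, so the involution $t$ you need must be an honest involution of $N_G(Q)$ lying outside $Q\,C_G(Q)$ (since $|K|$ is odd, involutions of $N_G(Q)/K$ lift to involutions). Such an element need not exist. Concretely, take $Q\cong Q_8$ inside $P={\sf SD}_{2^n}$ with $n\geq 5$: then $N_P(Q)\cong Q_{16}$ is generalized quaternion, whose unique involution is the central $z\in Q$. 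If in addition $Z(Q)\in{\mathrm{Syl}}_2(C_G(Q))$ (the typical, fully centralized situation), then $|N_G(Q)|_2=|Q|\cdot 2=|N_P(Q)|$, so every involution of $N_G(Q)$ is conjugate into $N_P(Q)$ and hence lies in $Q\leq Q\,C_G(Q)$. Thus no case-by-case search inside $P$ can produce your $t$, and Lemma \ref{S3} cannot be invoked in your quotient. (The case $Q\cong C_2\times C_2$, where $N_P(Q)\cong D_8$, and the case $n=4$ with $N_P(Q)\cong {\sf SD}_{16}$ are fine; the failure is precisely the quaternion normalizer.)

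The paper's proof circumvents this by quotienting by $L=K\times Q$ rather than by $K$ alone (after first checking $[K,Q]=1$ and $K\times Q\trianglelefteq N_G(Q)$). Modulo $L$, the image of $K\rtimes N_P(Q)$ is $N_P(Q)/Q\cong C_2$ and meets $\overline{Q\,C_G(Q)}\cong R/Q$ trivially, so an element $u\in N_P(Q)\setminus Q$ with $u^2\in Q$ (which always exists, even if $u$ has order $4$ or $8$ in $P$) \emph{becomes} the required involution in the quotient; moreover the preimage of the resulting $S_3$ already contains $N_P(Q)$ as a Sylow $2$-subgroup, so no further enlargement is needed. This also removes the secondary weakness in your last paragraph: to conclude $|\langle\bar T,\bar R\rangle|=3|\bar T|$ you would need all of $\bar T$ (not just $\bar t$) to normalize $\bar R$, or some other argument that $\bar T\bar R$ is a subgroup, which you have not supplied. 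I recommend redoing the argument with the kernel $K\times Q$ as above.
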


\begin{proof}
Since $C_G(Q)$ is $2$-nilpotent, the group $Q\,C_G(Q)$ is also $2$-nilpotent. Let $K:=O_{2'}(Q\,C_G(Q))$ and let $R \in {\mathrm{Syl}}_2(Q\,C_G(Q))$ containing
$Q\,C_P(Q)$, so that $Q\,C_G(Q)=K\rtimes R$. Note that since $O_{2'}(Q\,C_G(Q)) =O_{2'}(C_G(Q))$ we have $[K,Q]=1$ and so $K\rtimes Q=K \times Q$.
Note that $(K\times Q) \unlhd (K \rtimes N_P(Q))$. Moreover, since $K$ is a characteristic subgroup of $Q\,C_G(Q)$ and $Q\,C_G(Q) \unlhd N_G(Q)$,
we have $K \unlhd N_G(Q)$, so that $K \times Q \unlhd N_G(Q)$.

Let us consider quotients with respect to $L:=K \times Q$ and use the notation $\overline{H}$ for the image of $H \leq G$
under the natural epimorphism $\pi_L:N_G(Q) \twoheadrightarrow N_G(Q)/L$. Then $(\overline{Q\,C_G(Q)} )\cong R/Q$ is
a normal 2-subgroup of $\overline{N_G(Q)} $ and we have an isomorphism
$$\overline{N_G(Q)} \ / \ (\overline{Q\,C_G(Q)} ) \cong N_G(Q) \ / \ (Q\,C_G(Q)) \cong S_3.$$
Note that, $\overline{(K \rtimes N_P(Q))} \cong N_P(Q)/Q \cong C_2$ where
the latter isomorphism comes from
Lemma (viii) of \cite{KLCP}. Note also that,
$$\overline{(K \rtimes N_P(Q))} \cap \overline{(Q\,C_G(Q))} =\overline{(K\rtimes Q\,C_P(Q))}=\overline{1}$$
where the latter equality is true since $C_P(Q)=Z(Q)$ from Lemma (viii) of \cite{KLCP}. So there is an involution
$t \in \overline{N_G(Q)} \backslash \overline{(Q\,C_G(Q))}$ (in fact there is one in
$\overline{(K \rtimes N_P(Q))}$\ ). Hence, by Lemma \ref{S3} there is a subgroup $H$ of $\overline{N_G(Q)}$ such that $t \in H \cong S_3$. Set $H_Q$
as the preimage of $H$ under $\pi_L$. It is easy to see that $H_Q$ satisfies the required properties.
\end{proof}

Now, we are ready to prove the first main result, namely Theorem \ref{M1}.

\begin{proof}[Proof of Theorem \ref{M1}]
Set $ {\mathcal F}:={\mathcal F}_P(G)$. We want to prove that $M(Q)$ is
indecomposable as $Q\,C_G(Q)$-module.
It follows from Lemma 4.3(ii) of \cite{KKM}
that $M(P)$ is indecomposable as $P\,C_G(P)$-module.
Hence we can assume that $Q \lneqq P$.
From Lemma (i) of \cite{KLCP}, $P$ has exactly three maximal subgroups each of which
is either isomorphic to cyclic,
dihedral or generalized quaternion. Hence a fully ${\mathcal F}_P(G)$-normalized
subgroup $Q$ of $P$ is
isomorphic to either cyclic or dihedral or generalized quaternion.
Thus, from Proposition 3.2 (1) of \cite{CG} or Lemma 2.1.(i) of \cite{O},
it is easy to see that ${\mathrm{Aut}}(Q)$ is a $2$-group
unless $Q$ is isomorphic to $C_2 \times C_2$ or $Q_8$.

Suppose that $Q \not\cong C_2 \times C_2$ or $Q_8$. Although the proof for this case is
similar to the second paragraph of the proof of Theorem 1.3 of \cite{KL}, we add the details for the convenience of the reader.
Since in this case ${\mathrm{Aut}}(Q)$ is a $2$-group, $N_G(Q)/C_G(Q)$ is also a $2$-group.
Thus $N_G(Q)$ is $2$-nilpotent, since $C_G(Q)$ is assumed to be $2$-nilpotent. So
we can write $N_G(Q)=K \rtimes S$ where $K:=O_{2'}(N_G(Q))$ and $S$ is a Sylow $2$-subgroup of $N_G(Q)$.
Since $N_P(Q)$ is a $2$-subgroup, without loss of generality we can assume that $N_P(Q) \leq S$.
Let us set $H_Q:= K\rtimes N_P(Q)$, then $N_P(Q)$ is a Sylow $2$-subgroup of $H_Q$ and $|N_G(Q):H_Q|$ is
a power of $2$.

Suppose that $Q \cong C_2 \times C_2$ or $Q_8$. Then ${\mathrm{Out}}(Q) \cong S_3$ and
$$C_2 \cong N_P(Q)/Q\,C_P(Q) \leq N_G(Q)/Q\,C_G(Q)\hookrightarrow {\mathrm{Out}}(Q)$$
where the former isomorphism follows from Lemma (viii) of \cite{KLCP}, so that
we have two cases. If $N_G(Q) /Q\,C_G(Q)\cong C_2$, we can argue as in the previous paragraph and
get the desired subgroup $H_Q$. If $N_G(Q) /Q\,C_G(Q)\cong S_3$, by Corollary \ref{Q8} we again get $H_Q$
with the desired properties.

Therefore for all possible fully ${\mathcal F}_P(G)$-normalized $Q$, we find the subgroup $H_Q$ satisfying the conditions of
Theorem \ref{IK2}, so we can conclude that ${\mathrm{Sc}}(G,P)$ is Brauer indecomposable.
\end{proof}

\begin{Lemma}\label{cent}
Let $P$ be a subgroup of $G$ which is isomorphic to a semi-
\linebreak dihedral $2$-group
such that ${\mathcal F}_P(G)$ is saturated.
If, furthermore, $Q$ is a fully $\mathcal F_P(G)$-normalized subgroup of $P$ such that
$C_G(Q)$ is $2$-nilpotent,
then ${\mathrm{Res}}_{Q\,C_G(Q)}^{N_G(Q)} {\mathrm{Sc}}(N_G(Q), N_P(Q))$ is indecomposable, namely,
\linebreak
${\mathrm{Res}}_{Q\,C_G(Q)}^{N_G(Q)}\Big([{\mathrm{Sc}}(G, P)]( Q)\Big)$
is indecomposable.
\end{Lemma}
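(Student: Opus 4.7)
The plan is to mimic the case analysis in the proof of Theorem \ref{M1}, but applied to the single fully $\mathcal F_P(G)$-normalized subgroup $Q$ given in the hypothesis. Concretely, I want to construct a subgroup $H_Q \leq N_G(Q)$ satisfying the two conditions of Theorem \ref{IK2}; that theorem then immediately delivers the indecomposability of $\mathrm{Res}_{Q\,C_G(Q)}^{N_G(Q)} \mathrm{Sc}(N_G(Q), N_P(Q))$. The ``namely'' clause then reduces to identifying this Scott module with $[\mathrm{Sc}(G,P)](Q)$, which is the closing assertion of Theorem \ref{IK1}.

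First I dispose of $Q=P$ by Lemma 4.3(ii) of \cite{KKM}, and assume $Q \lneq P$. Since $P$ is semidihedral, Lemma (i) of \cite{KLCP} forces $Q$ to be cyclic, dihedral, or generalized quaternion, and Proposition 3.2(1) of \cite{CG} (equivalently Lemma 2.1(i) of \cite{O}) pinpoints the two exceptional types for which $\mathrm{Aut}(Q)$ is \emph{not} a $2$-group, namely $Q \cong C_2 \times C_2$ and $Q \cong Q_8$. In the generic case, where $\mathrm{Aut}(Q)$ is a $2$-group, $N_G(Q)/C_G(Q)$ is a $2$-group, and the assumed $2$-nilpotency of $C_G(Q)$ then propagates to $N_G(Q)$. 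Writing $N_G(Q)=K\rtimes S$ with $K=O_{2'}(N_G(Q))$ and $N_P(Q)\leq S\in\mathrm{Syl}_2(N_G(Q))$, I set $H_Q := K\rtimes N_P(Q)$; this clearly has $N_P(Q)$ as a Sylow $2$-subgroup and $|N_G(Q):H_Q|$ is a power of $2$.

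In the exceptional cases, $\mathrm{Out}(Q)\cong S_3$, and by Lemma (viii) of \cite{KLCP} we have $N_P(Q)/Q\,C_P(Q)\cong C_2$, which embeds into $N_G(Q)/Q\,C_G(Q)\hookrightarrow \mathrm{Out}(Q)$. There are therefore only two sub-cases: either $N_G(Q)/Q\,C_G(Q)\cong C_2$, in which case the argument of the generic case applies verbatim, or $N_G(Q)/Q\,C_G(Q)\cong S_3$, in which case Corollary \ref{Q8} directly supplies an $H_Q$ with the required properties. In all cases Theorem \ref{IK2} yields the desired indecomposability, and the ``namely'' restatement follows by invoking the identification $[\mathrm{Sc}(G,P)](Q)\cong \mathrm{Sc}(N_G(Q),N_P(Q))$ for fully $\mathcal F_P(G)$-normalized $Q$ from Theorem \ref{IK1}.

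The main obstacle is really just the careful handling of the two exceptional subgroup types $Q\in\{C_2\times C_2,Q_8\}$, where $\mathrm{Aut}(Q)$ is not a $2$-group and the $S_3$-quotient can appear; however, this delicate group-theoretic content has already been absorbed by Corollary \ref{Q8} (itself resting on the Baer-Suzuki-flavoured Lemma \ref{S3}). What remains is therefore essentially a bookkeeping task: split on the isomorphism type of $Q$, and in each case exhibit $H_Q$ satisfying the hypotheses of Theorem \ref{IK2}.
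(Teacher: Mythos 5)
Your proposal is correct and takes essentially the same route as the paper: the paper's proof of Lemma \ref{cent} is a one-line citation of the proof of Theorem \ref{M1} (which is exactly the case-by-case construction of $H_Q$ feeding into Theorem \ref{IK2} that you describe) together with the final part of Theorem \ref{IK1} for the ``namely'' identification. You have merely written out in full, for the single fully normalized $Q$ at hand, the argument the paper invokes by reference.
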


\begin{proof} This follows from the proof of Theorem \ref{M1} and the final part of Theorem \ref{IK1}.
\end{proof}

\section{Proof of Theorem \ref{product}}
The following lemma is used in the proof of our second main result.

{\begin{Lemma}\label{Q>8}
Let $P$ be a Sylow $2$-subgroup of $G$ which is isomorphic to a
semidihedral $2$-group.
If $Q\leq P$ with $|Q| \geq 8$, then $C_G(Q)$ is $2$-nilpotent.
\end{Lemma}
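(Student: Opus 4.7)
The plan is to show that any Sylow $2$-subgroup of $C_G(Q)$ is cyclic, which then yields $2$-nilpotency via Burnside's transfer theorem.

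First, I would prove a purely local statement: if $Q$ is any subgroup of $S := \text{\sf SD}_{2^n}$ with $|Q| \geq 8$, then $C_S(Q) \leq \langle x\rangle$, and in particular $C_S(Q)$ is cyclic. Since $[S:\langle x\rangle]=2$, we have $|Q \cap \langle x\rangle| \geq |Q|/2 \geq 4$, so $Q \cap \langle x\rangle$ is a cyclic subgroup of $\langle x\rangle$ containing an element $q = x^k$ with $x^k \notin \{1,z\}$. The relation $y^{-1}xy = x^{2^{n-2}-1}$ gives $(x^j y) x^k = x^{j+k(2^{n-2}-1)} y$ and $x^k (x^j y) = x^{k+j} y$, so these are equal iff $k(2^{n-3}-1) \equiv 0 \pmod{2^{n-2}}$; since $2^{n-3}-1$ is odd (as $n \geq 4$), this forces $x^k \in \{1, z\}$, contradicting the choice of $q$. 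Hence no element of $S \setminus \langle x\rangle$ centralizes $q$, so $C_S(q) = \langle x\rangle$ and $C_S(Q) \leq C_S(q) = \langle x\rangle$.

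Next, let $T$ be a Sylow $2$-subgroup of $C_G(Q)$. Since every element of $T$ commutes with every element of $Q$, the product $TQ$ is a $2$-subgroup of $G$, hence is contained in some Sylow $2$-subgroup $P^g$ of $G$ (which is again semidihedral). Then $T \leq C_{P^g}(Q)$, and applying the local step to $Q \leq P^g$ shows that $C_{P^g}(Q)$ is cyclic; in particular $T$ is cyclic.

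Finally, I would invoke the classical consequence of Burnside's normal $p$-complement theorem that a finite group $H$ with cyclic Sylow $2$-subgroup $T$ is $2$-nilpotent: $N_H(T)/C_H(T)$ embeds into $\mathrm{Aut}(T)$, which is a $2$-group, while $[N_H(T):T]$ is odd (as $T$ is Sylow) and $T \leq C_H(T)$, forcing $N_H(T) = C_H(T)$; hence $T \leq Z(N_H(T))$ and Burnside's theorem produces the desired normal $2$-complement in $H = C_G(Q)$. The main obstacle lies in the local centralizer computation: the hypothesis $|Q| \geq 8$ is sharp, since for $|Q|=4$ the Klein four subgroup $Q = \langle z, x^{2i}y\rangle$ has $C_S(Q) = Q$, which is non-cyclic and would break the strategy. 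Once cyclicity of $C_S(Q)$ is established, the globalization and the Burnside step are routine.
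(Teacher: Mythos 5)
Your proof is correct, but it takes a genuinely different route from the paper's. The paper first shows, by an explicit case analysis on generating sets of $Q$ (cyclic, $\langle x^iy,x^jy\rangle$, $\langle x^\kappa y,x^l\rangle$), that any $Q$ with $|Q|\geq 8$ contains some $x^i\notin\{1,z\}$, and then simply quotes Brauer's theorem (line $-3$ on p.~246 of \cite{Bra}) that $C_G(x^i)$ is $2$-nilpotent for such $x^i$; $2$-nilpotency then passes to the subgroup $C_G(Q)\leq C_G(x^i)$. You reach the same pivotal structural fact --- that $Q$ meets $\langle x\rangle$ in an element of order at least $4$ --- much more cleanly via the index-two estimate $|Q\cap\langle x\rangle|\geq |Q|/2\geq 4$, which replaces the paper's three-case computation in one line; and instead of citing Brauer you reprove the needed special case from scratch: the commutator computation showing $C_P(x^k)=\langle x\rangle$ for $x^k\notin\{1,z\}$ is correct, the globalization ($TQ$ is a $2$-group, hence lies in a conjugate of $P$, so the Sylow $2$-subgroups of $C_G(Q)$ are cyclic) is sound, and the Burnside normal $p$-complement step is standard. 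What the paper's approach buys is brevity modulo the external reference; what yours buys is a self-contained argument that also exhibits the precise Sylow $2$-structure of $C_G(Q)$, and your closing remark correctly identifies why $|Q|\geq 8$ is sharp (for a Klein four subgroup $C_P(Q)=Q$ is not cyclic, which is exactly why the paper treats $C_2\times C_2$ and $Q_8$ separately elsewhere).
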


\begin{proof}
Set $P:={\sf SD}_{2^n}:= \langle x,y \ | \ x^{2^{n-1}}=y^2=1, \ x^y=x^{2^{n-2}-1}\rangle$
where $n\geq 4$ as before.
Note that
setting $z:=x^{2^{n-2}}$, we have that $Z(P)= \langle z \rangle \cong C_2$.
If $Q$ contains an element $x^i \not\in \{1, z\}$, then $C_G(Q)$ is
$2$-nilpotent, because from line $-$3 on page 246 of \cite{Bra}
we know that for such $x^i$, $C_G(x^i)$ is $2$-nilpotent. So, in order to prove the
claim, it is enough to show that if $|Q| \geq 8$, then
there is an element $x^i \not\in\{ 1, z\}$ which lies in $Q$.

By Lemma (ii) of \cite{KLCP}, the elements $x^iy$ has order $2$ or $4$, when $i$ is
even or odd, respectively. So if
$Q$ is cyclic, then it must contain $x^i$ which has order greater or equal to $8$,
hence the claim is true. So,
we may assume that $Q$ is generated by two elements. We have two cases for this
situation, either $Q= \langle x^i y, x^j y \rangle $
or $Q= \langle x^\kappa y, x^l  \rangle $ where $i, j, \kappa, l$ are integers in the set $
\{1, 2, \ldots , 2^{n-1} \}$ with $i \neq j$.

In the first case, $Q$ has the element $a_{i,j}:=  (x^i y) (x^j y)=x^{i-j} z^j$
inside itself. If both $i$ and $j$ are even, then since $z^j=1$ we have that
$a_{i,j}=  x^{i-j}$. If $x^{i-j} \not\in \{1, z\}$, we have the result. Otherwise,
since $i \neq j$, $x^{i-j}$ can not be trivial,
hence there is only one possibility left:  $x^{i-j}=z$, but this implies
that $ (x^i y) (x^j y)= (x^j y) (x^i y)=z$, hence $Q \cong C_2 \times C_2$ contradicting
with the assumption $|Q| \geq 8$. So  $x^{i-j}=z$ can not happen.
If one of $i$ or $j$
is even and the other one is odd, then $a_{i,j}=x^{i-j} z^j$ where $i-j$ is odd. So, in this
case $a_{i,j}$ is a non-trivial element in $\langle x \rangle \backslash Z(P)$,
whence the result. If both $i$ and $j$ are odd, then $a_{i,j}=  x^{i-j} z$.
Similarly, if $x^{i-j} \not\in \{1, z\}$, we have the result. Otherwise, since $i \neq j$,
there is only
one possibility left:  $x^{i-j}=z$. In this case, $(x^i y)(x^j y)=z.z=1$ so that
$x^i y=(x^j y)^{-1}$. But this is impossible because if this holds $Q \cong C_4$
contradicting our assumption. So  $x^{i-j}=z$ can not happen in this case, too.

In the second case, if $x^l$ lies outside $Z(P)$, the result is true. So let us assume
that $x^l=z$ (since $x^l$ is non-tirivial by the assumptions),
then $Q=\langle x^\kappa y, z \rangle$. If $\kappa$ is even then $x^\kappa y$
is an involution and commutes with the central involution $z$, hence
$\langle x^\kappa y, z \rangle \cong C_2 \times C_2$. This gives a contradiction with our
assumption on $Q$. If $\kappa$ is odd, then $(x^\kappa y)^2=z$, so $Q \cong C_4$, which is
similarly a contradiction with the property of $Q$.
\end{proof}

\begin{proof}[Proof of Theorem \ref{product}]
Set $\mathcal H:=G \times G'$. Since $P$ is a Sylow $2$-subgroup of $G$,  ${\mathcal F}:={\mathcal F}_P(G)$ is a saturated fusion system
(see Proposition 1.3 of \cite{BLO}). Moreover since
${\mathcal F}_{\Delta P}(\mathcal H) \cong {\mathcal F}_P(G)$, we have that ${\mathcal F}_{\Delta P}(\mathcal H) $ is saturated.

Let $\Delta Q \leq \Delta P$ be any fully ${\mathcal F}_{\Delta P}(\mathcal H) $-normalized subgroup of $\Delta P$.
Setting $M:={\mathrm{Sc}}(\mathcal H, \Delta P)$, we claim that $M(\Delta Q)$ is
indecomposable as a $\Delta Q \, C_{\mathcal H}(\Delta Q)$-module.
Note that $C_{\mathcal H}(\Delta Q)=C_G(Q) \times C_{G'}(Q)$.

{\bf Case 1: $|Q|\geq 8$:}
If $Q$ is a subgroup of $P$ such that $|Q| \geq 8$, by Lemma \ref{Q>8} both $C_G(Q)$ and
$C_{G'}(Q)$ are $2$-nilpotent. Hence, $C_{\mathcal H}(\Delta Q)$ is also $2$-nilpotent.
Therefore, by Lemma \ref{cent} we have that
${\mathrm{Res}}_{\Delta Q \, C_{\mathcal H}(\Delta Q)}^{N_{\mathcal H}(\Delta Q)} M(\Delta Q)$ or equivalently
$${\mathrm{Res}}_{\Delta Q \, C_{\mathcal H}(\Delta Q)}^{N_{\mathcal H}(\Delta Q)}
{\mathrm{Sc}}(N_{\mathcal H}(\Delta Q), N_{\Delta P}(\Delta Q))$$
is indecomposable for such $Q$.

{\bf Case 2:} Either $Q\cong C_2\times C_2$ or $Q \cong C_4$:
Let $Q$ be a fully ${\mathcal F}$-normalized subgroup of $P$ such that $Q\cong C_2\times C_2$.
Since by Lemma (viii) of \cite{KLCP} there is only one $P$-conjugacy class of Klein four subgroups of $P$,
we can assume $Q:=Z(P)\times\langle y\rangle$.
Then, again by Lemma (viii) of \cite{KLCP}, $C_P(Q)=Q$, and
by Proposition 2.5 of \cite{L}, $Q$ is fully
${\mathcal F}$-centralized, so that from Lemma 2.10(i) of \cite{L}, $Q$ is a Sylow $2$-subgroup of $C_G(Q)$.
Hence the Schur-Zassenhaus Theorem yields that $C_G(Q)=Q\times O_{2'}(C_G(Q))$,
so that $C_G(Q)$ is $2$-nilpotent. Similarly, $C_{G'}(Q)$ is also $2$-nilpotent.
Thus, $C_{\mathcal H}(\Delta Q)$ is also $2$-nilpotent. Hence Lemma \ref{cent} implies that
${\mathrm{Res}}_{C_{\mathcal H}(\Delta Q)}^{N_{\mathcal H}(\Delta Q)}[M(\Delta Q)]$ is indecomposable.

Now, let $Q$ be a fully ${\mathcal F}$-normalized subgroup of $P$ such that $Q\cong C_4$. By
Theorem of \cite{KLCP} either all elements of order $4$ are $G$-conjugate or there are
exactly two $G$-conjugacy classes of elements of order $4$. The same situation holds for $G'$-conjugacy classes
and hence for ${\mathcal F}$-conjugacy classes. In the first case,
$\langle x^{2^{n-3}} \rangle$ is a fully ${\mathcal F}$-normalized representative of the single ${\mathcal F}$-conjugacy
class of cyclic subgroups of order $4$ of $P$. In the second
case, $\langle x^{2^{n-3}} \rangle$ and $\langle xy \rangle$ are representatives of fully ${\mathcal F}$-normalized
subgroups of two ${\mathcal F}$-conjugacy classses of cyclic subgroups of order $4$ of $P$. If $Q=\langle x^{2^{n-3}} \rangle$, then  line $-$3 on page 246 of \cite{Bra}
implies that both $C_G(Q)$ and $C_{G'}(Q)$ are $2$-nilpotent and hence $C_{\mathcal H}(\Delta Q )$
is a $2$-nilpotent subgroup of $\mathcal H$. Therefore Lemma \ref{cent} implies that
${\mathrm{Res}}_{C_{\mathcal H}(\Delta Q)}^{N_{\mathcal H}(\Delta Q)}[M(\Delta Q)]$ is indecomposable.
If $Q=\langle xy \rangle$, then since $Q$ is fully ${\mathcal F}$-normalized, it is fully ${\mathcal F}$-centralized
(see Proposition 2.5 of \cite{L}),
and hence by Lemma 2.10 (i) of \cite{L}, $C_P(Q) \in {\mathrm{Syl}}_2(C_G(Q))$, and similarly $C_P(Q) \in {\mathrm{Syl}}_2(C_{G'}(Q))$.
From Lemma (ix) of \cite{KLCP}, $C_P(Q)=Q$, and hence $C_P(Q)$ is a normal abelian Sylow $2$-subgroup of both $C_G(Q)$ and
$C_{G'}(Q)$. Hence the Schur-Zassenhaus Theorem implies that $C_G(Q)=Q \times O_{2'}(C_G(Q))$ and similarly for $C_{G'}(Q)$. In particular,
both of them are $2$-nilpotent.
Therefore $C_{\mathcal H}(\Delta Q)$ is $2$-nilpotent. Thus by Lemma \ref{cent}, we have that
${\mathrm{Res}}_{C_{\mathcal H}(\Delta Q)}^{N_{\mathcal H}(\Delta Q)}[M(\Delta Q)]$ is indecomposable in this
case, too.

\smallskip

{\bf Case 3: $Q \cong C_2$:}
Let $Q$ be a subgroup of $P$ which is isomorphic to a cyclic group of order $2$. From
Theorem in \cite{KLCP}, either all
involutions in $P$ are $G$-conjugate or there are two $G$-conjugacy classes of involutions of $P$ represented by $z$
and $y$. Similar situation holds for $G'$-conjugacy classes and for ${\mathcal F}$-conjugacy classes. Hence, it is enough to
prove the desired result for $Q=Z(P)$ and for $Q=\langle y \rangle$, since they are the fully ${\mathcal F}$-normalized elements in their own ${\mathcal F}$-conjugacy classes.

Suppose first that $Q=Z(P)$,
then $\Delta P \leq N_{\mathcal H}(\Delta Q)$. Suppose also that $M(\Delta Q)=M_1 \oplus \ldots \oplus M_r$
where $r \geq1 $ and $M_i$ are indecomposable  $N_{\mathcal H}(\Delta Q)$-modules.
From Theorem 4.8.6 (ii)] of \cite{NT} we can set $M_1:= {\mathrm{Sc}}(N_{\mathcal H}(\Delta Q), \Delta P)$. Since
$M(\Delta Q) \ | \ {\mathrm{Res}}_{N_{\mathcal H}(\Delta Q)}^{\mathcal H} (M) $,
we have that
$ M_i\ | \ {\mathrm{Res}}_{N_{\mathcal H}(\Delta Q)}^{\mathcal H} (M) $ for each $i$. Moreover
since $M \ | \ \mathrm{Ind}_{\Delta P}^{\mathcal H}(k)$, by Mackey decomposition we have that for a fixed $i\geq 2$,
$$M_i \ | \ \bigoplus_h {\mathrm{Ind}}_{N_{\mathcal H}(\Delta Q) \cap \ ^h\! \Delta P}^{N_{\mathcal H}(\Delta Q)} (k)$$
where
$h$ runs over representatives
of the double cosets in $N_{\mathcal H}(\Delta Q) \backslash {\mathcal H} / \Delta P$
which satisfies $\Delta Q \leq \ ^h\! \Delta P$ by 1.4 of \cite{Bro}. Hence a vertex $\Delta R$
of $M_i$ lies in $N_{^h\!\Delta P}(\Delta Q)=N_{\mathcal H}(\Delta Q) \cap \ ^h\! \Delta P$ for some $h$.
Note that
for such $h$, we have that
$$\Delta R \leq N_{^h\!\Delta P}(\Delta Q) \leq_{N_{\mathcal H}(\Delta Q)}  \Delta P$$
by Lemma 3.2 of \cite{IK}. So, we have that $M_1(\Delta R) \neq 0$.
On the other hand,
by applying Burry-Carlson-Puig's Theorem for $\Delta Q$
(see Theorem 4.4.6 (ii)] of \cite{NT}),
each $M_i$ has vertex not equal to $\Delta Q$. Hence $\Delta Q $ is a proper normal subgroup of $\Delta R$ and so
$$ M_1(\Delta R) \oplus M_i(\Delta R) \ | \  (M(\Delta Q))(\Delta R) \cong \ M(\Delta R)$$
as $N_{\mathcal H}(\Delta R) \cap N_{\mathcal H}(\Delta Q)$-modules, but
$C_{\mathcal H}(\Delta R) \leq N_{\mathcal H}(\Delta R) \cap N_{\mathcal H}(\Delta Q)$, so
the isomorphism above restricts to as  $C_{\mathcal H}(\Delta R) $-modules which contradicts with the fact that
$M(\Delta R)$ is indecomposable as a  $C_{\mathcal H}(\Delta R) $-module by Case 1 and Case 2.
Therefore, $r=1$ and $M(\Delta Q)$ is indecomposable
as a $C_{\mathcal H}(\Delta Q)$-module
since $C_{\mathcal H}(\Delta Q)=N_{\mathcal H}(\Delta Q)$.

Suppose now that $Q=\langle y \rangle$, and set
$S:=C_P(Q)$. Then,
$S=C_P(Q)=N_P(Q)=\langle y , z\rangle \cong C_2 \times C_2$.
Assume similarly that $M(\Delta Q)=M_1 \oplus \ldots \oplus M_r$
 where $M_i$ are indecomposable  $N_{\mathcal H}(\Delta Q)$-modules. By
Lemma 3.1 of \cite{IK} and Theorem 1.7 of \cite{K} we can set
$M_1:={\mathrm{Sc}}(N_{\mathcal H}(\Delta Q), \Delta S)$. Since $M \,|\,  {\mathrm{Ind}}_{ \Delta P}^{\mathcal H} (k) $, we have that
$$ M(\Delta Q) \ \Big| \  \bigoplus_{h} {\mathrm{Ind}}_{N_{\mathcal H}(\Delta Q) \cap \ ^h\! \Delta P}^{N_{\mathcal H}(\Delta Q)} (k)$$ where
$h$ runs over representatives
of the double cosets in $N_{\mathcal H}(\Delta Q) \backslash {\mathcal H} / \Delta P$
which satisfies $\Delta Q \leq \ ^h\! \Delta P$ by 1.4 of \cite{Bro}.
Hence,
$$M_i  \ \Big| \  \bigoplus_{h} {\mathrm{Ind}}_{N_{\mathcal H}(\Delta Q)
\cap \ ^h\! \Delta P}^{N_{\mathcal H}(\Delta Q)} (k)$$
for each $i$, where $h$ runs through the same
set. Hence, for a fixed $i$, a vertex of $M_i$ is contained in $N_{^h\!\Delta P}(\Delta Q)$. Note that
for such $h$, we have that
$$N_{^h\!\Delta P}(\Delta Q) \leq_{N_{\mathcal H}(\Delta Q)}  \Delta S$$
by Lemma 3.2 of \cite{IK}. So for a fixed $i$, any vertex of $M_i$ is
contained in $\Delta S$.
On the other hand, by Lemma 2.1 of \cite{IK} and
Burry-Carlson-Puig's Theorem (see Theorem 4.4.6 (ii) in \cite{NT}),
a vertex of $M_i$
contains $\Delta Q$ properly. Hence each $M_i$ has a vertex
$\Delta S$ { (Note that $|S:Q|=2$)}.
This implies that $M_i(\Delta S) \not= 0$ by (27.7) Corollary  in \cite{Th}.
Therefore, since $\Delta Q$ is normal in $\Delta S$,
$$ M(\Delta S) \cong (M(\Delta Q))(\Delta S)
= M_1(\Delta S) \oplus \ldots \oplus M_r(\Delta S)$$
as $N_{\mathcal H}(\Delta S) \cap N_{\mathcal H}(\Delta Q)$-modules. Since
$C_{\mathcal H}(\Delta S) \leq N_{\mathcal H}(\Delta S)
\cap N_{\mathcal H}(\Delta Q)$
and since we have already proved in {\bf Case 2} that
$M(\Delta S)$ is indecomposable as a $C_{\mathcal H}(\Delta S)$-module
(recall that $S\cong C_2\times C_2$), we have $r=1$, so $M(\Delta Q)$ is an indecomposable
$C_{\mathcal H}(\Delta Q)$-module.

\end{proof}

\bigskip

{\bf{Acknowledgements.}}
{\small
The first author was supported by
the Japan \linebreak
Society for Promotion of Science (JSPS), Grant-in-Aid for Scientific
\linebreak Research
(C)19K03416, 2019--2021. The second author was supported by Mimar Sinan Fine Arts University
Scientific Research Unit with project number 2019-28}.
The authors thank Caroline Lassueur for useful \linebreak
information on \cite{CG}.
}

\end{document}